\pgfplotsset{compat=1.9} 
\DeclareMathOperator*{\argmax}{arg\,max}
\DeclareMathOperator*{\conv}{conv}
\newcommand{\R}{\ensuremath{\mathbb{R}}}
\newcommand{\eps}{\varepsilon}
\newcommand*{\horzbar}{\rule[.5ex]{2.5ex}{0.5pt}}
\newcommand{\der}{\text{\textup{d}}}
\newcommand{\diag}{\textup{diag}}
\providecommand{\x}{}
\renewcommand{\x}{\mathbf{x}}
\newcommand{\y}{\mathbf{y}}
\newcommand{\hil}{\mathcal{H}}
\newcommand{\hilp}{\mathcal{H}_p}
\newcommand{\hilo}{\mathcal{H}_o}
\newcommand{\obs}{\mathcal{O}}
\newcommand{\fwd}{\mathcal{F}}
\newcommand{\obsm}{\widehat{\obs}}
\newcommand{\Sigmam}{\widehat{\Sigma}}
\newcommand{\postcovm}{\widehat{\Gamma_{\textup{post}}}}
\newcommand{\tar}{\Psi}
\newcommand{\data}{\mathbf{d}}
\newcommand{\param}{\mathbf{m}}
\newcommand{\normal}{\mathcal{N}}
\newcommand{\pr}{\mu_{\textup{pr}}} 
\newcommand{\post}{\mu_{\textup{post}}} 
\newcommand{\prmean}{\param_{\textup{pr}}} 
\newcommand{\postmean}{\param_{\textup{post}}} 
\newcommand{\postcov}{\Gamma_{\textup{post}}} 
\newcommand{\prcov}{\Gamma_{\textup{pr}}} 
\newcommand{\modcov}{\Gamma_{\textup{model}}} 
\newcommand{\tmp}{\mathcal{G}}
\newcommand{\meas}{\mathbf{o}}
\newcommand{\ev}{\mathbf{e}} 
\newcommand{\func}{\mathbf{a}}
\newcommand{\tr}[1]{\textup{tr}\left \{#1 \right \} }
\newcommand{\ttr}[1]{\textup{tr}\ #1}
\newcommand{\rank}{\textup{rank}\ }
\newcommand{\opt}{\mathcal{D}}
\newcommand{\postopt}{\mu_{\textup{post}}^{\data, \opt}} 
\theoremstyle{plain}
\newtheorem{theorem}{Theorem}[section]
\newtheorem{lemma}[theorem]{Lemma}
\newtheorem{proposition}[theorem]{Proposition}
\newtheorem{corollary}[theorem]{Corollary}
\theoremstyle{definition}
\newtheorem{definition}[theorem]{Definition}
\newtheorem*{ctheorem}{Theorem}
\theoremstyle{remark}
\begin{document}

\begin{frontmatter}
  \title{Clusterization in D-optimal designs: the case against linearization}
  \runtitle{Clusterization in D-optimal designs}

\begin{aug}
\author[A]{\fnms{Yair}~\snm{Daon}\ead[label=e1]{firstname.lastname@gmail.com}\orcid{0000-0002-7491-2008}}
\address[A]{Azrieli Faculty of Medicine, Bar-Ilan University, Safed, Israel 
\printead[presep={,\ }]{e1}}
\end{aug}

\begin{abstract}
  Estimation of parameters in physical processes often demands costly
  measurements, prompting the pursuit of an optimal measurement
  strategy. Finding such strategy is termed the problem of optimal
  experimental design, abbreviated as optimal design. Remarkably,
  optimal designs can yield tightly clustered measurement locations,
  leading researchers to fundamentally revise the design problem just
  to circumvent this issue. Some authors introduce error correlation
  among error terms that are initially independent, while others
  restrict measurement locations to a finite set of locations. While
  both approaches may prevent clusterization, they also fundamentally
  alter the optimal design problem.

  In this study, we consider Bayesian D-optimal designs, i.e.~designs
  that maximize the expected Kullback-Leibler divergence between
  posterior and prior. We propose an analytically tractable model for
  D-optimal designs over Hilbert spaces. In this framework, we make
  several key contributions: (a) We establish that
  measurement clusterization is a generic trait of D-optimal designs
  for linear inverse problems with independent Gaussian measurement
  errors and a Gaussian prior. (b) We prove that introducing
  correlations among measurement error terms mitigates
  clusterization. (c) We characterize D-optimal designs as
  reducing uncertainty across a subset of prior covariance
  eigenvectors. (d) We leverage this characterization to
  argue that measurement clusterization arises as a consequence of the
  pigeonhole principle: when more measurements are taken than there
  are locations where the select eigenvectors are large and others are
  small --- clusterization occurs. Finally, we use our analysis to
  argue against the use of Gaussian priors with linearized physical
  models when seeking a D-optimal design.
\end{abstract}

\begin{keyword}[class=MSC]
\kwd[Primary ]{62F15}
\kwd{35R30}
\kwd[; secondary ]{28C20}
\end{keyword}

\begin{keyword}
\kwd{D-optimal design}
\kwd{inverse problem}
\kwd{clusterization}
\end{keyword}

\end{frontmatter}

\section{Introduction}\label{section:intro}
Experimental design is a systematic approach to specifying every
possible aspect of an experiment \cite{chaloner1995}. In this study,
we focus on experiments that involve measuring quantities arising from
naturally occurring processes. For us, therefore, experimental design
entails determining which specific measurements to record from a given
process. In a typical experiment the number of measurements that can
be recorded is limited by time, cost and other constraints. Therefore
selecting the right set of measurements to record is crucial as it
directly impacts the utility of the data collected, and consequently,
the validity of the conclusions drawn.

For example, consider the process of searching for oil: measurements
require digging deep holes in the ground, known as "boreholes". A
\emph{design} simply indicates which boreholes should be dug
\cite{horesh2008borehole}. Since digging boreholes is expensive, only
a limited number of them can be dug, and thus carefully choosing their
locations is crucial.

Specifying the right set of measurements holds particular significance
when solving an \emph{inverse problem} --- i.e.~making inference of
the physical world utilizing a physical model of a phenomenon of
interest \cite{tarantola2005,kaipio2005}. In an inverse problem, we
seek to infer physical quantities, based on observations and a model
of the physical world. Models are typically phrased in the language of
ordinary or partial differential equations.

Inverse problems are ubiquitous in science and engineering. For
example, in electrical impedance tomography (EIT) we seek to infer the
structure of the inside of a human body. Inference is conducted based
on observations of electrical current impedance measured at specific
electrode locations on the skin, leveraging known equations of
electric current flow \cite{horesh2010impedance}. Magnetic Resonance
Imaging (MRI) also entails solving an inverse problem. There,
radio-frequency pulses are sent through the human body in the presence
of a strong magnetic field. The response of one's body contents to
these radio-frequency pulses is measured, allowing a radiologist to
view the internals of a body in a noninvasive manner
\cite{horesh2008mri}. An inverse problem also arises in oil and gas
exploration, where acoustic waves are sent through "boreholes" ---
deep cylindrical wells drilled into the ground. Data of travel and
return times of the acoustic waves are recorded. Wave travel and
return time is influenced by the properties of the subsurface
materials, such as density, elasticity, and the presence of fluids or
voids. Combining travel time data with a geophysical model of the
contents of the earth's crust facilitates reconstructing the structure
of the subsurface in a process called \emph{borehole tomography}
\cite{horesh2008borehole}. Inverse problems also arise in many other
areas of seismology and geology \cite{rabinowitz1990, steinberg1995}
and medical imaging \cite{tarantola2005}. In many inverse problems the
goal is to infer some numerical parameter. However, in the formulation
we consider in this study, as well as in the examples above, the goal
is to conduct inference over some \emph{function} over $\Omega$, where
\(\Omega \subseteq \mathbb{R}^d, d=1,2,3\) is a spatial domain of
interest.

\subsection{D-optimal Designs}\label{subsec:D}
In all of the examples of inverse problems we have seen, as well as in
many other applications, only a limited number of measurements are
allowed. For example, in borehole tomography, a measurement involves
drilling a deep well in the ground --- a very costly endeavor. In
medical imaging applications like MRI, the time allotted for each
patient in the MRI machine limits the number of measurements that can
be taken during a single session. Therefore, measurements should be
chosen to extract as much information from the experiment. Typically,
a user will consider some utility, called a \emph{design criterion}
and take measurements that maximize this utility. Two of the most
widely recognized and extensively studied design criteria are the A-
and \emph{D-optimality} criteria.

In this study, we focus on the Bayesian D-optimality
criterion. Bayesian D-optimality carries a simple and intuitive
meaning: measurements chosen according to the D-optimality criterion
maximize the expected Kullback-Leibler (KL) divergence from posterior
to prior \cite{chaloner1995, AlexanderianGloorGhattas14,
  CoverThomas91}.
Recall that for a discrete parameter $\param$ and data $\data$, the KL
divergence is defined as
\begin{equation}\label{eq:basic_KL}
  D_{KL}\left (\Pr(\param|\data)||\Pr(\param)\right ) =  \sum_{\param} \log
  \frac{\Pr(\param|\data)}{\Pr(\param)} \Pr(\param|\data). 
\end{equation}
Of course, data $\data$ is not known before the experiment is
conducted. Hence, we average over $\data$ to define the D-optimality
criterion as:
\begin{equation*}
  \mathbb{E}_{\data}\left [ D_{KL}\left (\Pr(\param|\data)||\Pr(\param)\right ) \right ].
\end{equation*}
We refer to a set of measurements that maximizes the D-optimality
criterion as a \emph{D-optimal design}.

For a linear model with Gaussian prior in finite dimensions, a
D-optimal design minimizes the determinant of the posterior covariance
\cite{chaloner1995}. In Section \ref{subsec:D_optimal_design} we give
a more general definition of the D-optimality criterion that applies
to arbitrary measures. We also show how the D-optimality criterion
generalizes to linear models over infinite-dimensional Hilbert spaces
(e.g.~function spaces).

\subsection{A toy model: the 1D heat equation}\label{subsec:toy}
For concreteness, let us now consider a toy inverse problem: inferring
the initial condition for a partial differential equation known as the
\emph{heat equation} in one dimension (1D heat equation
henceforth). Readers less familiar with partial differential equations
can think of the following setup: before us there is a metal rod,
perfectly insulated except for its tips. At each point on the rod the
temperature is different and unknown, and we call this temperature
distribution the \emph{initial condition}. We wait for a short time
$T$ and let the heat dissipate a bit inside the rod. The heat equation
determines the heat distribution inside the rod at time $t=T$.

The full time evolution of heat in the rod $\Omega=[0,1]$ is formally
described by the following three equations:
\begin{subequations}
  \begin{alignat}{2}
    u_t &= \Delta u &&\qquad \text{in } [0,1] \times [0,\infty), \label{eq:heat1}\\
    u &= 0 &&\qquad \text{on } \{0, 1\} \times [0,\infty), \label{eq:heat2}\\
    u &= u_0 &&\qquad \text{on }[0,1] \times \{0\}. \label{eq:heat3}
  \end{alignat}
\end{subequations}
As time passes, heat dissipates across the rod as hotter regions,
e.g.~local maxima of the heat distribution, become cooler. This
behavior is captured by eq.~\eqref{eq:heat1}: the Laplacian at local
maxima is negative, so $u_t = \Delta u$ implies $u$ decreases at its
maxima, and the reverse happens at local minima. Eq.~\eqref{eq:heat2}
describes the \emph{boundary condition}, which dictates how heat
interacts with the outside world at the two edges of the rod
$\{0,1\}$. Specifically, eq.~\eqref{eq:heat2} implements an absorbing
boundary, i.e.~heat that interacts with the boundary immediately
disappears. This type of boundary condition is known as a
\emph{homogeneous Dirichlet boundary condition}. Eq.~\eqref{eq:heat3}
describes the \emph{initial condition}, i.e.~the heat distribution
inside the rod at time $t=0$.

Solving the 1D heat equation is straightforward. Consider an initial
heat distribution $u_0\in L^2([0,1])$ that satisfies the homogeneous
Dirichlet boundary condition eq.~\eqref{eq:heat2}. This initial
condition $u_0$ is a linear combination of sines $\ev_n(x) = \sin(\pi
n x)$, so $u_0 = \sum_{n\geq 1} a_n \ev_n$ for some $\{a_n\}_{n\geq
  1}$. Now, note that $\ev_n$ are in fact eigenvectors of the
Laplacian: $\Delta \ev_n = -\pi^2 n^2\ev_n$. It should not be hard to
believe that $u(\cdot, T) = \sum_{n\geq 1} a_n \exp(-\pi^2 n^2T )
\ev_n$ \cite{renardy2006PDE}. Thus, eigenvectors of the linear
operator that describes the time evolution of heat are $\ev_n(x) =
\sin(\pi n x)$ with corresponding eigenvalues
\begin{equation}\label{eq:decay}
  \exp(-\pi^2 n^2T ).
\end{equation}

In the \emph{inverse problem of the 1D heat equation}, our goal is to
infer the initial condition $u_0 = u(\cdot, 0)$ from noisy
observations of the final state $u(\cdot, T)$. However, we are not
able to measure $u(\cdot, T)$ at every point in the domain
$\Omega$. Rather, we take some number of temperature measurements on
the rod once our waiting time $T$ has passed and try to infer
$u(\cdot, 0)$ from these measurements.

The inverse problem of the 1D heat equation is difficult ("ill-posed")
since heat spreads in a diffusive manner and any roughness in the
initial condition is quickly smoothed, as implied by the squared
exponential decay of eigenvalues in eq.~\eqref{eq:decay} with
$n\to\infty$. See Supplementary movies S1 and S2 for an illustration
of this phenomenon\footnote{Code generating these movies is located in
module \texttt{movies.py} in the accompanying
\href{https://github.com/yairdaon/OED}{repository}.}. For ill-posed
problems regularization is required, and we implement such
regularization via a fully Bayesian formulation of the inverse
problem.

In order to find a fully Bayesian formulation for our inverse problem
we first need to specify a prior on \emph{functions} defined over
$\Omega = [0,1]$. For the sake of simplicity we would like to utilize
a Gaussian prior. While specifying the prior mean is easy --- we take
the zero function on $\Omega$ --- specifying a prior covariance is
more involved. In analogy with the finite dimensional case, we seek a
covariance \emph{operator} that is positive definite and imposes
sufficient regularity on functions over $\Omega$. Thus, we set our
prior for the initial condition $u_0 \sim \mathcal{N}(0,
(-\Delta)^{-1})$, where $\Delta$ is again defined with a homogeneous
Dirichlet boundary condition. The intuition here is that since
$\Delta$ is a differential operator, it is "roughing" (the opposite of
smoothing) and so $(-\Delta)^{-1}$ is smoothing. Prior realizations
are generated, in analogy to the finite dimensional case, by smoothing
white noise: $(-\Delta)^{-1/2}\mathcal{W}$, where $\mathcal{W}$ is
white noise \cite{rue2011}. Our choice of prior covariance operator
ensures the posterior is well-defined and prior realizations are
well-behaved, see Theorem 3.1 and Lemma 6.25 in \cite{Stuart10} for
details. The second ingredient of a fully Bayesian formulation of the
inverse problem is likelihood. We simply assume centered iid Gaussian
measurement error model, giving rise to a the widely applicable
Gaussian likelihood function.

Now that we have a fully Bayesian formulation of our toy inverse
problem, we would like to find a D-optimal design. Defining D-optimal
designs over function spaces is somewhat mathematically
involved. Thus, to keep our discussion here focused, we will explore
these technical details later in Section
\ref{subsec:D_optimal_design}. For now, it suffices to know that
D-optimal designs can be defined for our problem.

\subsection{Measurement Clusterization}
Surprisingly, A- and D-optimal designs for inverse problems have been
observed to yield remarkably similar measurements in certain cases
\cite{fedorov1996, nyberg2012, fedorov1997, Ucinski05,
  neitzel2019sparse}. This phenomenon is illustrated for our toy
inverse problem in Fig.~\ref{fig:clusterization_illustration}, where
D-optimal measurement locations are shown for different numbers of
measurements. Notably, for six measurements, a D-optimal design yields
two sets of measurements that are identical. Following
\cite{Ucinski05}, we refer to this intriguing phenomenon as
\emph{measurement clusterization}. We consider a design to be
\emph{clustered} when two or more of its constituent measurements are
identical.

\begin{figure}
    \centering
    \includegraphics[height=0.5\textwidth]{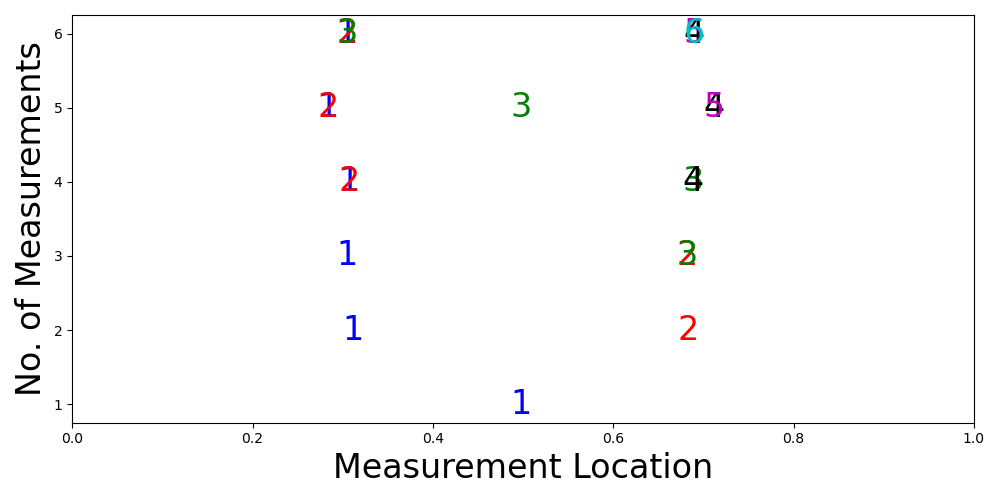}
    \caption{Measurement clusterization in D-optimal designs for the
      inverse problem of the 1D heat equation. Measurement locations
      were chosen according to the Bayesian D-optimality criterion of
      Theorem \ref{thm:d_optimality}. Measurement locations are
      plotted over the computational domain \(\Omega = [0, 1]\)
      (x-axis), for varying numbers of measurements (y-axis). The
      colored numbers are measurement indices, plotted for visual
      clarity. Measurement clusterization already occurs for three
      measurements: the second measurement (red) is overlaid on the
      third (green). For five measurements, first (blue) and second
      (red) measurements are clustered, as well as the fourth (black)
      and the fifth (magenta).}
  \label{fig:clusterization_illustration}
\end{figure}

Clusterization should not be confused with replication. Replication
requires that the experimentalist executes multiple trials under
circumstances that are \emph{nominally identical} \cite[Section
  1.2.4]{morris2011}. Replication is commonly viewed as a beneficial
and even necessary aspect of optimal experimental design
\cite{fisher1949design, morris2011, schafer2001replication}.
For example, \cite{fisher1949design}, suggested repeating his famous
milk and tea experiment in order "to be able to demonstrate the
predominance of correct classifications in spite of occasional
errors". Unfortunately, in the experiments we consider, replication is
impossible. For example, in the MRI problem, we cannot generate an
individual nominally identical to the one we wish to scan.

Similarly to a design implementing replication, a clustered design
reduces the signal-to-noise ratio of the repeated measurements
\cite{telford2007brief}. The difference is that a clustered design
takes repeated measurements at the expense of other quantities not
measured at all. For example, consider an experiment measuring the
effect of rainfall on grass growth \cite{fay2000rainfall}. The
experiment involved four rainfall manipulation "treatments"
(i.e.~simulating different timing and quantity of rainfall), each
replicated three times over different plots of land. Indeed, it seems
reasonable for researchers to replicate the phenomenon they are trying
to study. A clustered design in such an experiment would imply the
researchers should take a repeated measurement \emph{on the same
plot}, at the expense of measuring grass growth in other plots!

As another example illustrating the differences between replication
and clusterization, consider an experiment involving a chemical
reaction conducted at different temperatures. An investigator may
choose to repeat the experiment at $10^\circ C$ and $30^\circ C$,
giving rise to \emph{replication}. Within a single experiment, the
investigator may also sample the concentrations of the reagents
through the course of the experiment. Simultaneous sampling (within a
single experiment) gives rise to \emph{clusterization}. While
replication is intuitively sensible for this chemical experiment,
clusterization is not.

To conclude our short discussion on replication vs.~clusterization:
these are fundamentally different concepts and even though replication
is quite intuitive, it is inapplicable to the inverse problems we
consider in this manuscript.

Measurement clusterization is generally considered undesirable
\cite{fedorov1996, nyberg2012, fedorov1997, Ucinski05,
  neitzel2019sparse}. It is counterintuitive for a so-called "optimal"
design to prioritize marginally improving accuracy in a small region
over taking measurements in unexplored regions of the computational
domain, where they could significantly reduce uncertainty.

The issue with clusterization is evident in
Fig.~\ref{fig:clusterization_illustration}, where most clustered
designs shown place little to no measurement weight in the center of
the domain, where prior uncertainty is largest. It seems as though a
clustered design assumes that focusing measurement efforts on a small
subset of the domain can somehow yield information about the entire
domain. Intuitively, this is an unreasonable assumption, albeit it
holds for \emph{linear} models.

The view that clusterization is undesirable prompted the exploration
of various remedies to address this issue. One approach involves
merging close measurements \cite{fedorov1997}; however, this strategy
merely overlooks the phenomenon of measurement clusterization. An
alternative solution lies in \emph{clusterization-free design}s, where
measurement locations are deliberately chosen to be distant from one
another. This can be achieved by imposing distance constraints between
measurements or by introducing correlated errors that account for both
observation error and model misspecification \cite{Ucinski05}. For
instance, in the context of time-series analysis for pharmacokinetic
experiments, measurement clusterization can be mitigated by
incorporating the modeling of auto-correlation time within the noise
terms \cite{nyberg2012}.

In spatial problems involving choice of measurements within a domain
\(\Omega \subseteq \mathbb{R}^d, d=1,2,3\), many researchers
circumvent the problem of measurement clusterization by restricting
measurements to a coarse grid in \(\Omega\) \cite{koval2020,
  alexanderian2021, attia2022, alexanderian2014, alexanderian2016,
  alexanderian2018efficient, brunton2016}. This approach incurs a
significant computational cost as it requires solving a difficult
combinatorial optimization problem for measurement locations over a
discrete set. The combinatorial optimization problem is usually
relaxed by first assigning optimal measurement weights in
\(\mathbb{R}_+\) to the potential measurement locations. Some
researchers incorporate a sparsifying \(\ell_1\) penalty term into the
design criterion, which is subsequently thresholded to achieve the
desired binary design over the coarse grid
\cite{horesh2008borehole}. Others progressively relax the \(\ell_1\)
penalty to an \(\ell_0\) penalty via a continuation method
\cite{alexanderian2016, alexanderian2014}. Others cast the problem of
finding optimal measurement weights as a stochastic optimization
problem \cite{attia2022stochastic}. All of the aforementioned methods
may indeed find a binary optimal design restricted to a given coarse
grid. However, none addresses one fundamental issue: the restriction
of measurement locations to a coarse grid in \(\Omega\) fundamentally
changes the optimal design problem and thus results in a sub-optimal
design.

Avoiding measurement clusterization is a pragmatic approach:
intuitively, researchers recognize that measurement clusterization is
undesirable, even though the underlying reasons may not be fully
clear. Consequently, they strive to prevent it and devise various
methodologies to avoid it. Yet each and every one of these
methodologies achieves its objective by imposing restrictions on
measurement locations, thereby fundamentally altering the optimal
design problem. To the best of my knowledge, no previous study has
tried to address some seemingly simple yet fundamental questions:
Why does imposing correlations between observations alleviate
measurement clusterization?
Is measurement clusterization a generic phenomenon? 
And, most importantly: Why does measurement clusterization occur?
%
%

\subsection{Contribution}
The primary objective of this study is to provide a deep understanding
of measurement clusterization by addressing the aforementioned
questions. Our focus centers around investigating the Bayesian
D-optimality criterion. We conduct an analysis of Bayesian D-optimal
designs within the context of linear inverse problems over Hilbert
spaces and study two inverse problems: (a) In Sections
\ref{section:prelim} and \ref{section:D_and_grad} we propose a novel
generic model for an inverse problem where D-optimality maintains
analytical tractability and D-optimal designs are identified via
Lagrange multipliers. This analytical framework facilitates the
exploration of the questions posed at the end of the previous
paragraph. We also study (b) the inverse problem of the 1D heat
equation from Section \ref{subsec:toy} above. Investigating both
inverse problems allows us to answer the questions posed in the
previous section:

\begin{enumerate}
\item \label{q:generic} \textbf{Is measurement clusterization a
  generic phenomenon?}
  We give two complementing answers to this question. First, from a
  theoretical perspective, we show that clusterization mainly depends
  on how quickly the eigenvalues of the prior covariance in
  observation space decay.
  See Section \ref{section:vanishing} --- particularly, Theorem
  \ref{thm:char} and the discussion following it. Furthermore, in
  Section \ref{subsec:lemma_sims} we show results of numerical
  experiments, where simulations of our model give rise to D-optimal
  designs that exhibit clusterization with high probability. Thus,
  given the genericity of our model, we expect measurement
  clusterization to be a generic and ubiquitous phenomenon.

\item \label{q:mitigate} \textbf{Why does imposing correlations
  between observations alleviate measurement clusterization?} In
  Section \ref{section:non_vanishing}, we rigorously demonstrate the
  role of model error in mitigating clusterization, thereby
  corroborating earlier observations made by various
  researchers. Specifically, our proof shows that identical
  measurements result in no gain in design criterion when observation
  error amplitude tends to zero. Moreover, in Section
  \ref{subsec:corr_errors_sims}, we show that an error term
  corresponding to correlations between measurements mitigates
  clusterization in the inverse problem of the 1D heat equation.

\item \label{q:why} \textbf{Why does measurement clusterization
  occur?} In Sections \ref{subsec:why} we give two compelling answers
  to this question by (i) transporting insights we gain from our
  generic model to the inverse problem of the 1D heat equation, and
  (ii) connecting measurement clusterization to Carath\'eodory's
  Theorem.

  Our analysis for the heat equation relies on conclusions from our
  generic model. In particular, Theorem \ref{thm:char} reveals that a
  D-optimal design focuses on a select set of prior eigenvectors,
  i.e.~those with the largest eigenvalues in the prior covariance
  spectrum. In practical scenarios, the number of locations where (a)
  all relevant prior eigenvectors are significantly large, and (b)
  other eigenvectors are close to zero, is limited. Consequently, the
  clusterization of measurements arises as a natural consequence of
  the pigeonhole principle, as there are more measurements available
  than there are locations satisfying conditions (a) and (b).

  The connection to Carath\'eodory's Theorem also builds on the focus
  of measurement effort to the abovementioned select set of prior
  eigenvectors. This allows us to move from an infinite-dimensional
  setting to finite dimensions, where the conditions of
  Carath\'eodory's Theorem hold. We conclude that a D-optimal design
  arises by weighting a small number of measurements. If the number of
  allowed measurements is larger than the small number dictated by
  Carath\'eodory's Theorem, we have excessive weight on some
  measurements, which can be interpreted as clusterization.
  
\end{enumerate}

\subsubsection{Implications}\label{subsub:implications}
Our answer to Question \ref{q:generic} implies that encountering
clusterization could be expected in many different problems across
many different scientific fields. Researchers that encounter
clusterization should not be surprised or wary. In Our answer to
Question \ref{q:why}, we explain what our view of the cause of
clusterization is. It appears, the cause is generic: a D-optimal
design reduces uncertainty for a select set of prior covariance
eigenvectors --- those with the most prior uncertainty, i.e.~those the
practitioner cares about the most!

A further implication of the analysis we present is that
clusterization can serve as an evidence to the number of relevant
eigenvectors in the problem. These eigenvectors correspond to the
relevant degrees of freedom in the problem. Thus, clusterization tells
us that we might be able to reduce the complexity of a computational
model, e.g.~by dropping discretization points.

One potential cause of clusterization is our choice of prior. Gaussian
priors, coupled with a Gaussian likelihood and a linear forward
problem give rise to a closed form solution for the posterior via
conjugacy. As we show in Section \ref{subsec:lemma_sims}, such
assumptions generically give rise to clusterization. While an
assumption of a Gaussian likelihood (i.e.~independent Gaussian noise)
is standard, and rooted in the central limit theorem, an assumption of
Gaussian prior is merely a matter of convenience. Therefore, we advise
any practitioner who encounters clusterization to replace their
Gaussian priors with non-Gaussian priors instead \cite{hosseini2017,
  hosseini2019}, as these are not only more realistic but also
expected to mitigate clusterization.

Another potential cause of clusterization is linearization. While in a
real applications the model is not necessarily linear, some authors
consider a linearized version of their model when seeking optimal
designs \cite{fedorov1996, neitzel2019sparse}. Our analysis then shows
that the linearity of the forward problem is also an important
ingredient in giving rise to clustered designs. Therefore, our advice
to practitioners is to avoid linearization, and find D-optimal designs
via other methods, e.g.~the sampling method of \cite{ryan2003}.

Given the above discussion, it is our belief that the methods
suggested by other authors to avoid clusterization are merely
overlooking the problem. We do not view clustered designs as
inherently bad on their own. Rather, we suggest that if a clustered
design arises, a practitioner should revisit their modeling choices
--- specifically, their choice of prior and model linearization. If
the practitioner is confident in their analysis, then clustered
designs should be avoided only to the extent necessitated by the
physical measuring apparatus.

Clusterization in D-optimal designs raises the question of whether
D-optimal designs should be pursued at all. While other authors have
established convergence results for decaying measurement error
\cite{knapik2011}, space-filling measurements \cite{teckentrup2020}
and randomly sampled designs \cite{nickl2023}, their results do not
hold for D-optimal designs. Luckily, we can expect convergence from
D-optimal designs (including clustered designs) in the following
sense: the posterior uncertainty ellipsoid will contract to zero along
each one of its eigenvectors. See Section \ref{subsub:convergence} for
a precise statement and a proof based on Theorem \ref{thm:char} ---
the main theorem of this manuscript.

\subsubsection{Other Contributions}
In Theorem \ref{thm:char}, we also show that D-optimal designs are
best understood in the space of \emph{observations}
This is in accordance with previous work by \cite{koval2020}, who
showed that A-optimal designs are best constructed in the space of
observations.

In the process of proving Theorem \ref{thm:char} we prove and
generalize several lemmas. Among those, is Lemma \ref{lemma:free},
which is (to the the best of my knowledge) novel: We decompose a
symmetric positive definite matrix \(M \in \mathbb{R}^{k \times k}\)
with \(\ttr M = m \in \mathbb{N}\) as \(M = AA^t\), where \(A\in
\mathbb{R}^{k \times m}\) has unit norm columns.

\subsection{Limitations}\label{subsec:limitations}
The main limitation of this study is that our generic model does not
correspond to any specific real-life problem. Specifically, in its
current form, our model does not allow point evaluations. Thus, while
our model is generic enough to be analytically tractable, one may
argue that our model is too far removed from any real application. To
these claims I would answer that scientists have a long history of
studying models that are bare-bones simplifications of real systems,
e.g.~the Ising model \cite{cipra1987}, the Lorenz system \cite{brin},
the Lotka-Volterra equations \cite{logan2006}, the Carnot engine
\cite{kardar2007} and many others.

\section{Preliminaries and Notation}\label{section:prelim}
Here we present the basics of Bayesian inverse problems over Hilbert
spaces. Since we are ultimately interested in inferring a function
over some domain, we will keep in mind that these Hilbert spaces
should really be thought of as function spaces, utilizing a setup
similar to \cite{knapik2011}. A deeper treatment of the foundations
for inverse problems over function spaces can be found in
\cite{Stuart10}.

\subsection{Forward Problems}\label{subsec:abstract_OED}
In this section we give definitions and notations for forward
problems, which are an essential part of the inverse problems we
discuss later. Consider a "parameter space" \(\hilp\) and an
"observation space" \(\hilo\) --- both separable Hilbert spaces (the
subscripts p and o are for "parameter" and "observation",
respectively). The parameter space includes the quantity we seek to
infer; in the inverse problem of the heat equation, the parameter
space $\hilp$ is where the initial condition lives. The observation
space $\hilo$, on the other hand, is the space from which we take
measurements; in the example of the heat equation, $u(\cdot, T) \in
\hilo$.

The connection between parameter and observation spaces is made by the
\emph{forward operator} \(\fwd: \hilp \to \hilo\). We assume the
forward operator \(\fwd\) is linear. In the inverse problem of the 1D
heat equation, the forward operator is determined by the time
evolution of the 1D heat equation \eqref{eq:heat1} and
\eqref{eq:heat2}, so $u(\cdot, T) = \fwd u_0$.

Measurements are taken via a linear \emph{measurement operator}
\(\obs\), which is the concatenation of linear functionals
$\meas_1,\dots,\meas_m$ we call \emph{measurements}:
\begin{equation*}
  \obs u = (\meas_1(u), \dots, \meas_m(u) )^t \in \R^m,\ u \in \hilo.
\end{equation*}
Thus, \(\obs \in ( \hilo^* )^m\), where \(m\) is the number of
measurements taken\footnote{The alert reader will likely ask how do we
reconcile point measurements $\delta_x$ as suggested by the
formulation of the 1D heat equation with working in Hilbert spaces. We
don't. We follow standard practice in the literature and restrict our
analysis to Hilbert spaces. We can satisfy ourselves with the fact
that point evaluations could be approximated in a standard Hilbert
space like $L^2(\Omega)$.}. 

Data is acquired via noisy observations, and we consider two types of
error terms: Spatially correlated model error \(\eps' \sim
\normal(0,\modcov)\) with \(\modcov\) a covariance operator; and
observation error denoted \(\eps \sim \normal(0, \sigma^2 I_m)\), with
\(I_m \in \mathbb{R}^{m \times m}\) the identity. Both error terms and
the prior (see Section \ref{subsec:bayesian_inverse_problems} below)
are assumed independent of each other. Thus, data is acquired via
\begin{align}\label{eq:inverse_problem}
  \data := \obs (\fwd \param + \eps') + \eps = \obs \fwd \param + \obs \eps' + \eps.
\end{align}

It is easy to verify that \(\obs \eps' + \eps \in \R^m\) is a centered
Gaussian random vector with covariance matrix

\begin{align}\label{eq:Sigma}
  \begin{split}
    \Sigma(\obs) :&= \mathbb{E}[ (\obs \eps' + \eps) (\obs \eps' +
      \eps)^t ]
    = \obs \modcov \obs^* + \sigma^2I_m , 
  \end{split}
\end{align}
where
\begin{align}\label{eq:modcov_explained}
  \begin{split}
    [\obs \modcov \obs^*]_{ij} & = e_i^t \obs \modcov \obs^* e_j 
    = \meas_i (\modcov \meas_j).
  \end{split}
\end{align}
Taking \(\modcov = 0\) is a common practice
\cite{tarantola2005,kaipio2005,Vogel02} and then \(\Sigma =
\sigma^2I_m\) is a scalar matrix which does not depend on \(\obs\).

\subsection{Bayesian Linear Inverse Problems}\label{subsec:bayesian_inverse_problems}
In the previous section, we saw how a parameter $u\in \hilp$ is
transported to the observation space via the forward operator $\fwd u
\in \hilo$, how observations are generated from a parameter via $\obs
\fwd u$ and how observations and noise give rise to data $\data$. It
is time to formulate the process of inferring the parameter as a
Bayesian inverse problem. We have already defined the Gaussian
likelihood in the previous section, and now we will define the prior.

We take a Gaussian prior \(\param \sim \pr = \normal(\prmean
,\prcov)\) with some appropriate covariance operator \(\prcov\) on
\(\hilp\) \cite{Stuart10}. For example, for the inverse problem of the
1D heat equation we chose $\prcov = (-\Delta)^{-1}$, as described in
Section \ref{subsec:toy}. Note that \(\fwd \prcov \fwd^*\) is the
prior covariance in \(\hilo\) \cite{Stuart10}, and as such is assumed
invertible --- an assumption which we will use later (if \(\fwd\) has
a nontrivial kernel we utilize Occam's Razor and ignore said kernel
altogether).

Since $\fwd$ is linear and $\pr$ is Gaussian --- the posterior
\(\post\) is Gaussian as well. We do not utilize the posterior mean in
this study, but the posterior covariance operator $\postcov$ is given
by the known formula \cite{Stuart10}:
\begin{align}\label{eq:postcov}
  \postcov = (\prcov^{-1} + \fwd^* \obs^* \Sigma^{-1} \obs \fwd
  )^{-1}.
\end{align}

\subsection{Bayesian D-Optimal Designs}\label{subsec:D_optimal_design} 
A Bayesian D-optimal design maximizes the expected KL divergence
between posterior \(\post\) and prior measures \(\pr\). For arbitrary
posterior and prior measures, the KL divergence is defined,
analogously to eq.~\eqref{eq:basic_KL}, via the Radon-Nikodym
derivative:
\begin{equation*}
  D_{KL}(\post||\pr) = \int \log \frac{\der \post}{\der \pr}(\param) \der \post(\param).
\end{equation*}

The study of D-optimal designs for Bayesian linear inverse problems in
infinite dimensions was pioneered by \cite{AlexanderianGloorGhattas14,
  alexanderian2018efficient}. The main result we will make use of is
summarized (in our notation) below:

\begin{theorem}[Alexanderian, Gloor, Ghattas \cite{AlexanderianGloorGhattas14}]\label{thm:d_optimality}
  Let \(\pr = \normal(\prmean,\prcov)\) be a Gaussian prior on \(\hilp\)
  and let \(\post = \normal(\postmean,\postcov)\) the posterior measure
  on \(\hilp\) for the Bayesian linear inverse problem \(\data = \obs
  \fwd\param + \obs \eps' + \eps\) discussed above. Then
  \begin{align}\label{eq:objective}
    \begin{split}
      \tar( \obs) :&= \mathbb{E}_{\data}\left [ D_{\text{KL}} (\post || \pr ) \right ] \\
      &= \frac12 \log \det 
      ( I + \prcov^{1/2}  \fwd ^* \obs^* \Sigma^{-1} \obs \fwd \prcov^{1/2}).
    \end{split}
  \end{align}
\end{theorem}

In \cite{AlexanderianGloorGhattas14, alexanderian2018efficient},
results are stated for \(\Sigma=I\) (implied by \(\modcov =
0,\sigma^2=1\)), but these results also hold for more general
covariance matrices \cite[p. 681]{AlexanderianGloorGhattas14}.

\begin{definition}\label{def:d_optimality}
  We say \(\opt\) is \emph{D-optimal} if \(\opt =
  \argmax_{\obs} \tar(\obs)\), where entries of \(\obs \in (\hilo^*)^m\)
  are constrained to some allowed set of measurements in \(\hilo^*\).
\end{definition}

Intuition for Theorem \ref{thm:d_optimality} can be gained by
recalling from Section \ref{subsec:D} that for a Bayesian linear model
in finite dimensions, with Gaussian prior and Gaussian noise, a
D-optimal design minimizes the determinant of the posterior covariance
matrix. Theorem
\ref{thm:d_optimality} and Definition \ref{def:d_optimality} carry a
similar intuition:
\begin{align*}
  \begin{split}
    \tar(\obs) &= \frac12 \log \det ( I + \prcov^{1/2}  \fwd ^* \obs^* \Sigma^{-1} \obs \fwd \prcov^{1/2}) \text{ by \eqref{eq:objective}}\\
    &= \frac12 \log \det \Big( \prcov ( \prcov^{-1} + \fwd ^* \obs^* \Sigma^{-1} \obs \fwd) \Big )\\
    &= \frac12 \log \det \prcov \postcov^{-1} \text{ by \eqref{eq:postcov}}.
  \end{split}
\end{align*}
We think of \(\prcov\) as constant, in the sense that $\prcov$ does
not depend on data $\data$. Thus, a D-optimal design minimizes a
quantity analogous to the posterior covariance determinant, similarly
to the finite-dimensional case.

\section{The Constrained Optimization Problem of D-Optimal Design}\label{section:D_and_grad}
We seek a formulation of the D-optimal design problem via Lagrange
multipliers. We first find the gradient of $\tar$, then we suggest
unit-norm constraints on $\obs$ and find their gradients. Results of
this section are summarized in Theorem \ref{thm:constrained}. First,
recall that:
\begin{definition}\label{def:var}
  Let $F$ a real valued function of $\obs$. The first variation of $F$
  at $\obs$ in the direction $V$ is:
  \begin{equation*}
    \delta F(\obs) V := \frac{\der}{\der \tau}\Big |_{\tau=0}  F( \obs + \tau V).
  \end{equation*}

  Moreover, if
  \begin{equation*}
    \delta F(\obs) V = \tr{\nabla F(\obs) V},
  \end{equation*}
  then we call $\nabla F(\obs)$ the gradient of $F$ at $\obs$. 
\end{definition}



\begin{proposition}\label{prop:tar_grad}
  The gradient of the D-optimality objective $\tar$ is
  \begin{equation*}
    \nabla \tar(\obs) = ( I - \modcov \obs^* \Sigma^{-1}\obs ) \fwd
    \postcov \fwd^* \obs^* \Sigma^{-1}
  \end{equation*}
\end{proposition}

The proof amounts to calculating the variational derivative of $\tar$
at $\obs$ for any direction $V$ (by Definition \ref{def:var}) and is
delegated to the Supplementary.

\subsection{Unit norm constraints and their gradient}
In a real-life optimal design problem we cannot choose any measurement
operator $\obs \in (\hilo^*)^m$. In order to facilitate analysis, we
seek reasonable constraints on $\obs$ for which finding a D-optimal
design is analytically tractable. The following proposition will guide
us in finding such constraints.

\begin{proposition}\label{prop:bigger_better}
  Let $\obs = (\meas_1,\dots,\meas_m)^t$, $j \in \{1,\dots,m\}$,
  $\sigma^2 > 0$ and $|\zeta| > 1$. Then $\tar(\obs)$ increases if we
  use $\zeta \meas_j$ in $\obs$ instead of $\meas_j$.
\end{proposition}

\begin{proof} 
  Fix $j=1,\dots,m$ and take $V:= e_j e_j^t \obs$. For $u
  \in \hilo$:
  \begin{equation*}
    Vu = e_je_j^t (\meas_1(u),\dots,\meas_m(u) )^t = e_j \meas_j(u)
    = (0,\dots,0,\meas_j(u),0,\dots,0)^t.
  \end{equation*}
  We now calculate the variation of $\tar$ at $\obs$ in the direction
  of $V$. Denote $\tmp: = \fwd \postcov \fwd^*$. From Proposition
  \ref{prop:tar_grad}:
  \begin{align*}
     \delta \tar(\obs) V 
    &= \tr{V ( I - \modcov \obs^*\Sigma^{-1}\obs) \tmp \obs^* \Sigma^{-1}} \\
    &= \tr{e_je_j^t \obs ( I - \modcov \obs^*\Sigma^{-1}\obs) \tmp \obs^* \Sigma^{-1}} \\
    &= e_j^t \obs ( I - \modcov \obs^*\Sigma^{-1}\obs) \tmp \obs^* \Sigma^{-1}e_j \\
    &= e_j^t ( I - \obs \modcov \obs^*\Sigma^{-1})\obs \tmp \obs^* \Sigma^{-1}e_j \\  
    &=  e_j^t(\Sigma-\obs \modcov \obs^*) \Sigma^{-1}\obs \tmp \obs^* \Sigma^{-1}e_j \\
    &=\sigma^2 e_j^t \Sigma^{-1}\obs \tmp \obs^* \Sigma^{-1}e_j
    \text{ by \eqref{eq:Sigma} }\\
    &=\sigma^2 e_j^t \Sigma^{-1}\obs \fwd \postcov \fwd^* \obs^* \Sigma^{-1}e_j.
  \end{align*} 
  Since $\postcov$ is positive definite, we conclude that $\delta
  \tar(\obs) V > 0$. This means that increasing the magnitude of the
  $j^{\text{th}}$ measurement functional increases $\tar(\obs)$.
\end{proof}

Proposition \ref{prop:bigger_better} implies that it is a good idea to
bound the norm of measurements. If, for example, we can take
measurements in $\textup{span}\{\meas\}$ for some $\meas \neq 0$, then
the D-optimality criterion is unbounded, so a D-optimal design does
not exist. In contrast, in any real-life problem where sensors are
concerned, the norm of measurements recorded by sensors is always
one\footnote{Again, our analysis does not directly apply to point
evaluations. We just utilize point evaluations for motivation. We can
approximate point evaluations by e.g.~elements in $\hilo^*$ as long as
$\hilo$ is a function space, e.g.~$L^2(\Omega)$. In this case, for a
fixed apporximation of $\delta$ the norm of the corresponding
functional is a constant $\neq 1$.}:

\begin{equation}
  \| \delta_{\x} \| = \sup_{0 \neq u \in C(\Omega)}
  \frac{
    |\int_{\Omega}u(\y) \delta_{\x}(\y) \der \y|
  }{
    \sup|u|
  } = \sup_{0 \neq u \in C(\Omega)} \frac{|u(\x)|}{\sup|u|} = 1,
  \forall \x \in \Omega.
\end{equation}

Thus, it is reasonable to consider measurements with unit $\hilo^*$
norm. We can write the unit norm constraints as a series of $m$
equality constraints (one for each measurement) on $\obs$. We define
them and find their gradients in Proposition
\ref{prop:constraints_grad} below, whose proof is straightforward and
delegated to the Supplementary:

\begin{proposition}\label{prop:constraints_grad}
  Let
  \begin{align*}
    \phi_j(\obs) :=\frac12 \| \obs^* e_j\|_{\hilp}^2 - \frac12 = 0,\ j=1,\dots,m.
  \end{align*}
  Then
  \begin{equation*}
    \nabla \phi_j(\obs) = \obs^* e_je_j^t.
  \end{equation*}
\end{proposition}




\subsection{Necessary conditions for D-optimality}
We find necessary first-order conditions for D-optimality via Lagrange
multipliers:

\begin{align}
  &\nabla \tar(\obs) = \sum_{j=1}^m \xi_j \nabla \phi_j (\obs)
  \label{eq:Lagrange_mult1} \\
    &\phi_j(\obs) = 0, j = 1,\dots,m. \label{eq:Lagrange_mult2}
\end{align}

We now substitute the gradients calculated in Propositions
\ref{prop:tar_grad} and \ref{prop:constraints_grad} into
eq.~\eqref{eq:Lagrange_mult1}:
\begin{equation}\label{eq:constrained}
  (I - \modcov \obs^* \Sigma^{-1} \obs) \fwd \postcov \fwd^* \obs^*\Sigma^{-1}
  = \sum_{j=1}^m \xi_j \obs^* e_je_j^t = (\xi_1 \meas_1,\dots,\xi_m \meas_m).
\end{equation} 
Letting $\Xi := \diag(\xi_j)$, we can write \eqref{eq:constrained} and
\eqref{eq:Lagrange_mult2} more compactly as:

\begin{theorem}[Necessary conditions for D-Optimality]\label{thm:constrained}
  Let:
  \begin{equation*}
    \opt = \argmax_{\|\meas_j\| = 1, j=1,\dots,m}\tar(\obs).
  \end{equation*}
  
  Then:
  \begin{equation*}
    ( I - \modcov \opt^* \Sigma^{-1} \opt) \fwd \postcov \fwd^* \opt^*  \Sigma^{-1}
    = \opt^* \Xi, 
  \end{equation*}
  where $\Xi \in \mathbb{R}^{m \times m}$ is diagonal.
\end{theorem}

\section{Answer to Question \ref{q:mitigate}: Model error mitigates clusterization}\label{section:non_vanishing}
We now show that if $\modcov \neq 0$ clusterization will not occur. It
is known that including a model error term mitigates the
clusterization phenomenon \cite{Ucinski05}, and here we prove this
rigorously. Let $\obs = (\meas_1,\dots,\meas_m)^t$ and $\obsm :=
(\meas_1,\dots,\meas_{m-1})^t$. Denote $\Sigmam := \Sigma (\obsm)$ and
$\postcovm$ the posterior covariance that arises when $\obsm$ is
utilized as a measurement operator.

\begin{proposition}[Increase due to a measurement]\label{prop:design_increase}
  Let $\obs = (\meas_1,\dots,\meas_m)^t$ and $\obsm :=
  (\meas_1,\dots,\meas_{m-1})^t$. Then
  \begin{equation}\label{eq:conclusion}
    \tar( \obs ) - \tar (\obsm ) =
    \frac12 \log \left ( 1 + \frac{
      \langle \fwd \postcovm \fwd^* (\obsm^* \Sigmam^{-1} \modcov - I ) \meas_m,
      (\obsm^* \Sigmam^{-1} \modcov - I ) \meas_m \rangle
    }{
      \sigma^2 + \meas_m \modcov \meas_m - \meas_m \modcov \obsm^* \Sigmam^{-1} \obsm \modcov \meas_m 
    }       
    \right ).
  \end{equation}
\end{proposition}
The proof is long and tedious, and is delegated to the Supplementary.

\begin{corollary}\label{cor:same_meas}
  If $\meas_m = \meas_j$ for some $1 \leq j \leq m-1$, then
  \begin{equation*}
    \tar(\obs) - \tar(\obsm) =
    \log \left ( 1 + \frac{\sigma^2
      \langle \fwd \postcovm \fwd^* \obsm^* \Sigmam^{-1} e_j,
      \obsm^* \Sigmam^{-1}e_j \rangle
    }{
      2 - \sigma^2 e_j^t\Sigmam^{-1}e_j 
    }       
    \right ),
  \end{equation*}
  where $e_j\in \mathbb{R}^{m-1}$ is the $j^{\text{th}}$ standard unit
  vector.
\end{corollary}

\begin{proof} \label{cor:same_meas_proof}
  Denote $A:= \obs \modcov \obs^*$ and $v_j$ the $j^{\text{th}}$
  column of $A$.  Note that $v_j = \obsm \modcov \meas_m$, since
  $(\obsm \modcov \obsm^*)_{ij} = \meas_i(\modcov \meas_j)$, as
  explained in the paragraph preceding
  eq.~\eqref{eq:modcov_explained}. We can now verify that
  \begin{equation}\label{eq:observation}
    \Sigmam^{-1} \obsm \modcov \meas_m = \Sigmam^{-1}v_j = (A +\sigma^2I_{m-1})^{-1} v_j =
    e_j -\sigma^2 \Sigmam^{-1}e_j.
  \end{equation}
  Using \eqref{eq:observation}:
  \begin{align}\label{eq:denominator}
    \begin{split}
      \meas_m \modcov \obsm^* \Sigmam^{-1} \obsm \modcov \meas_m
      &= \meas_m \modcov \obsm^* ( e_j - \sigma^2 \Sigmam^{-1} e_j )\\
      &= \meas_m \modcov \meas_j - \sigma^2 \meas_m \modcov \obsm^* \Sigmam^{-1}e_j \\
      &= \meas_m \modcov \meas_j -\sigma^2 (e_j - \sigma^2 \Sigmam^{-1}e_j)^t e_j \\
      &= \meas_m \modcov \meas_m -\sigma^2 + \sigma^4 e_j^t\Sigmam^{-1}e_j.
    \end{split}
  \end{align}
  We use \eqref{eq:observation} to simplify the enumerator in
  \eqref{eq:conclusion}:
  \begin{align}\label{eq:enumerator}
    \begin{split}
      (\obsm^* \Sigmam^{-1} \obsm \modcov - I ) \meas_m
      &= \obsm^* \Sigmam^{-1} \obsm \modcov \meas_m - \meas_m \\
      &= \obsm^* (e_j - \sigma^2 \Sigmam^{-1} e_j) -\meas_j \\ 
      &= -\sigma^2 \obsm^* \Sigma^{-1}e_j. 
    \end{split}
  \end{align}
  Now, we substitute \eqref{eq:enumerator} and \eqref{eq:denominator}
  to the enumerator and denominator of \eqref{eq:conclusion}:
  \begin{align*}
    \tar( \obs ) - \tar (\obsm ) &=
    \log \left ( 1 + \frac{
      \langle \fwd \postcovm \fwd^* (\obsm^* \Sigmam^{-1} \modcov - I ) \meas_m,
      (\obsm^* \Sigmam^{-1} \modcov - I ) \meas_m \rangle
    }{
      \sigma^2 + \meas_m \modcov \meas_m - \meas_m \modcov \obsm^* \Sigmam^{-1} \obsm \modcov \meas_m 
    }       
    \right ) \\
    &= \log \left ( 1 + \frac{\sigma^4
      \langle \fwd \postcovm \fwd^* \obsm^* \Sigmam^{-1} e_j,
      \obsm^* \Sigmam^{-1}e_j \rangle
    }{
      2\sigma^2 - \sigma^4 e_j^t\Sigmam^{-1}e_j 
    }       
    \right ) \\
    &= \log \left ( 1 + \frac{\sigma^2
      \langle \fwd \postcovm \fwd^* \obsm^* \Sigmam^{-1} e_j,
      \obsm^* \Sigmam^{-1}e_j \rangle
    }{
      2 - \sigma^2 e_j^t\Sigmam^{-1}e_j 
    }       
    \right ).
  \end{align*}
\end{proof}

Recall from \eqref{eq:Sigma} that $\Sigma(\obs) = \obs
\modcov \obs^* + \sigma^2I$ and let $u := \obsm^*
\Sigmam^{-1}e_j$. Then

\begin{align*}
  \begin{split}
    \lim_{\sigma^2 \to 0} u &= \obsm^*(\obsm \modcov \obsm^*)^{-1}e_j\\
    \lim_{\sigma^2 \to 0} \postcovm &= (\prcov^{-1} + \fwd^* \obsm^* (\obsm \modcov \obsm^*)^{-1} \obsm \fwd)^{-1} \text{ (From \eqref{eq:postcov})}.
  \end{split}
\end{align*}

Consequently, 
\begin{equation*}
   \langle \fwd \postcovm \fwd^* \obsm^* \Sigmam^{-1}
    e_j, \obsm^* \Sigmam^{-1}e_j \rangle 
  = \langle \fwd \postcovm \fwd^* u, u \rangle
\end{equation*}

is bounded, and

\begin{equation*}
\lim_{\sigma^2 \to 0} \tar(\obs) -\tar(\obsm) = 0.
\end{equation*}

We have shown that in the limit $\sigma^2 \to 0$, no increase in
$\tar$ is gained by repeating a measurement. Thus, for vanishing noise
levels, clustered designs cannot be D-optimal. For repeated
measurements and $\sigma^2=0$, $\Sigma$ is not invertible and the
posterior covariance is not defined in eq.~\eqref{eq:postcov}. We can
\emph{define} the posterior in this case to equal the posterior when
the repeated measurement is dropped, and under this definition, a
repeated measurement trivially does not increase the design criterion
when $\sigma^2=0$. Our results are stronger, since we show
\emph{continuity} in $\sigma^2$.

It is worth noting that by the nonnegativity of the KL divergence,
$\tar$ cannot decrease upon adding measurements. However, we can
construct examples where the posterior does not change upon taking a
new measurement, e.g.~if the prior variance vanishes on some
eigenvector and a measurement is taken on said eigenvector. We do not
expect a measurement to generate no information gain whatsoever in any
realistic scenario, and ignore such pathologies.

In conclusion, for small observation error $\sigma^2$ levels,
measurement clusterization is mitigated by the presence of a non-zero
model error $\modcov$ --- answering Question \ref{q:mitigate} posed in
the Introduction.

\section{D-Optimal Designs Without Model Error}\label{section:vanishing}
Our goal in this section is to prove Theorem \ref{thm:char} which
characterizes D-optimal designs when $\modcov = 0$. The necessary
first-order condition for D-optimality of Theorem
\ref{thm:constrained} for $\modcov = 0$ become:

\begin{equation}\label{eq:eigenproblem}
  \sigma^{-2}\fwd \postcov \fwd^* \obs^* = \obs^* \Xi,
\end{equation}
with $\Xi$ diagonal. Equation \eqref{eq:eigenproblem} looks like an
eigenvalue problem for the self-adjoint operator $\sigma^{-2}\fwd
\postcov \fwd^*$, where rows of $\obs$, namely $\meas_j,j=1,\dots, m$,
are eigenvectors. However, $\postcov$ depends on $\obs$, so we refer
to \eqref{eq:eigenproblem} as a \emph{nonlinear} eigenvalue problem.

\begin{proposition}\label{prop:twice_woodbury}
  Assume $\fwd \prcov \fwd^*$ is invertible. Then
  \begin{align*}
    \begin{split}
      \fwd( \prcov^{-1} + \sigma^{-2}  \fwd^* \obs^* \obs \fwd )^{-1} \fwd^* 
      = \left ( (\fwd\prcov\fwd^*)^{-1} + \sigma^{-2}  \obs^* \obs \right )^{-1},
    \end{split}
  \end{align*}  
\end{proposition}

As we mentioned in Section \ref{subsec:bayesian_inverse_problems},
$\fwd \prcov \fwd^*$ is the prior covariance in $\hilo$ and could be
safely assumed invertible. The proof of Proposition
\ref{prop:twice_woodbury} is delegated to the Supplementary. It
amounts to using Woodbury's matrix identity twice. The standard proof
for Woodbury's matrix identity works for separable Hilbert spaces, as
long as all terms are well defined. Unfortunately, $\obs^*\obs$ is not
invertible, so we utilize a regularization trick to force it to be.

\begin{lemma}[Simultaneous diagonalizability]\label{lemma:sim_diag}
  Let $\hil$ separable Hilbert space, $C:\hil \to \hil$ self-adjoint
  and $\func_1,\dots,\func_m \in \hil$. Denote $\func^*$ the element
  $\func$ acting as a linear functional. If
  \begin{equation*}
   (C + \sum_{j=1}^m \func_j\func_j^*) \func_l = \xi_l \func_l,\ l = 1,\dots,m
  \end{equation*}
  then $C$ and $\sum_{j=1}^m \func_j \func_j^*$ are simultaneously
  diagonalizable.
\end{lemma}
The proof of Lemma \ref{lemma:sim_diag} is delegated to the
Supplementary.

\begin{proposition}\label{prop:same_ev}
  Let $\obs$ satisfy the nonlinear eigenvalue problem
  \eqref{eq:eigenproblem}. Then $\obs^*\obs$ and $\fwd \prcov \fwd^*$
  are simultaneously diagonalizable.
\end{proposition}
\begin{proof}
  \begin{align}\label{eq:mod_conditions}
    \begin{split}
      \obs^* \Xi &= \sigma^{-2}\fwd \postcov \fwd^* \obs^*  \text{ (by \eqref{eq:eigenproblem})}\\
      &= \sigma^{-2} \fwd( \prcov^{-1} + \sigma^{-2}  \fwd^* \obs^* \obs \fwd )^{-1} \fwd^* \obs^*  \text{ (by \eqref{eq:postcov})} \\
      &= \sigma^{-2} \left ( (\fwd\prcov\fwd^*)^{-1} + \sigma^{-2}  \obs^* \obs \right )^{-1} \obs^* \text{ (by Proposition \ref{prop:twice_woodbury})}.
    \end{split}
  \end{align}

  Now take $\func_j^{*} = \meas_j$ and $C := (\fwd \prcov
  \fwd^*)^{-1}$ and use Lemma \ref{lemma:sim_diag}.
\end{proof}

Since we made no assumption regarding the ordering of $\{\lambda_i\}$,
we can denote the corresponding non-zero eigenvalues of $\obs^*\obs$
by $\{\eta_i\}_{i=1}^{k}$ and let $\eta_i = 0$ for $i \geq k+1$.

\begin{proposition}\label{prop:true_target}
  Let $\obs$ with $m$ measurements satisfy the nonlinear eigenvalue
  problem \eqref{eq:eigenproblem}. Let $\{\eta_i\}_{i=1}^{\infty}$
  eigenvalues of $\obs^*\obs$ and $\{\lambda_i\}_{i=1}^{\infty}$ the
  corresponding eigenvalues of $\fwd \prcov \fwd^*$. Let $k:=\rank
  \obs^*\obs$. Without loss of generality, let $\eta_i > 0$ for $i\leq
  k$ and $\eta_i = 0$ for $i > k$. Then:
  \begin{enumerate}
    \item $k \leq m$ and $\obs^*\obs$ has exactly $k$ positive
      eigenvalues.
    \item
      \begin{equation*}
        \tar(\obs) = \frac12 \sum_{i=1}^{k} \log (1 + \sigma^{-2}\lambda_i\eta_i) = \frac12 \sum_{i=1}^{m} \log (1 + \sigma^{-2}\lambda_i\eta_i).
      \end{equation*}
    \item Furthermore, if $\obs$ is D-optimal, $\eta_i > 0$ for
      eigenvectors corresponding to the $k$ largest $\lambda_i$.
  \end{enumerate}
\end{proposition}
\begin{proof}
  Part (1) is trivial. To see part (2) holds: 
  \begin{align}
    \begin{split}
      \tar(\obs) &= \frac12\log \det \left (I + \sigma^{-2} \prcov^{1/2} \fwd ^* \obs^*
      \obs \fwd \prcov^{1/2}\right )\\
      &= \frac12 \log \det \left (I + \sigma^{-2} \obs^* \obs \fwd
      \prcov\fwd^* \right ) \text{ (Sylvester's Determinant
      Theorem)}\\
      &=\frac12 \log \prod_{i=1}^{\infty} ( 1 + \sigma^{-2} \lambda_i\eta_i ) \text{ (Proposition \ref{prop:same_ev})} \\
      %
      %
      %
      %
      %
      %
      &=\frac12 \sum_{i=1}^{k} \log (1 + \sigma^{-2}\lambda_i\eta_i). 
      %
      %
    \end{split}
  \end{align}
  Part (3) holds since $\log$ is increasing and $\eta_i \geq 0$.
\end{proof}

\begin{proposition}\label{prop:kkt}
  Let $\tar: \mathbb{R}^m \to \mathbb{R}$, $\tar(\eta) =
  \frac{1}{2}\sum_{i=1}^m \log (1+\sigma^{-2}\lambda_i \eta_i)$, with
  $\lambda_i > 0$ and $\sigma^{2} > 0$. Then the maximum of $\tar$
  subject to $\eta_i \geq 0$ and $\sum\eta_i = m$ is obtained at
  \begin{equation}
  \eta_i = \begin{cases}
    \frac{m}{k} - \sigma^2 \lambda_i^{-1} + \sigma^2 \frac{1}{k} \sum_{j\in A} \lambda_j^{-1} & i \in A \\
    0 & i \in A^c
  \end{cases}
  \end{equation}
  where $A:= \{1\leq i \leq m: \eta_i > 0\}$ and $A^c = \{1,\dots, m\}
  \backslash A$, and $k = |A|$, the cardinality of $A$.
\end{proposition}

The proof of Proposition \ref{prop:kkt} amounts to utilizing the
Karush-Kuhn-Tucker conditions and is delegated to the
Supplementary. The final ingredient we require for the proof of
Theorem \ref{thm:char} characterizing D-optimal designs is:

\begin{lemma}[Unit norm decomposition]\label{lemma:free}
  Let $M \in \R^{k \times k}$ symmetric positive definite with $\ttr M
  = m$, $m \geq k$. We can find $\func_j \in \R^k,j=1,\dots,m$
  with $\|\func_j\|=1$ and $A = (\func_1,\dots,\func_m)$ such that
  $AA^t = M$.
\end{lemma}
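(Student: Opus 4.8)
The plan is to build the vectors one at a time by a ``peeling'' induction, maintaining a positive semidefinite remainder whose trace is a nonnegative integer and whose rank never exceeds that trace. Concretely, I would prove the following invariant by induction on $s$: if $N \in \R^{k\times k}$ is symmetric positive semidefinite with $\tr N = s$ and $\rank N \le s$, then $N = \sum_{j=1}^s \func_j\func_j^t$ for unit vectors $\func_j$. Applying this with $N = M$ and $s = m$ is legitimate since $M$ is positive definite of rank $k < m$, so $\rank M = k \le m = \tr M$; it yields $M = AA^t$ with $A = (\func_1,\dots,\func_m)$, because $AA^t = \sum_j \func_j\func_j^t$. The base case $s = 0$ is immediate, a positive semidefinite matrix of trace $0$ being the zero matrix.

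For the inductive step with $s \ge 1$ I would exhibit a single unit vector $\func$ with $N - \func\func^t \succeq 0$, automatically $\tr(N-\func\func^t) = s-1$ since $\tr \func\func^t = \|\func\|^2 = 1$, and crucially $\rank(N - \func\func^t) \le s-1$, and then invoke the induction hypothesis on $N - \func\func^t$. Two cases arise. If $\rank N < s$, the largest eigenvalue satisfies $\lambda_{\max} \ge \tr N / \rank N > 1$, so subtracting $u_1 u_1^t$ for a unit top eigenvector $u_1$ preserves positive semidefiniteness and does not raise the rank; since $\rank N \le s-1$ already, the invariant survives.

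The delicate case is $\rank N = s$, where I must force the rank to \emph{drop} by one. Here the nonzero eigenvalues average to $\tr N/\rank N = 1$, so either they are all equal to $1$, in which case $N$ is a projection and its orthonormal eigenvectors are unit vectors that peel off one at a time, or there are eigenvalues $a := \lambda_{\max} > 1 > b := \lambda_{\min} > 0$. In the latter case I seek $\func = \alpha u_{\max} + \beta u_{\min}$ with $\alpha^2 + \beta^2 = 1$ inside $\mathrm{span}\{u_{\max},u_{\min}\}$, on which $N$ acts as $\diag(a,b)$. A short computation shows that taking $\beta^2 = b(a-1)/(a-b)$, which lies in $[0,1]$ precisely because $a > 1 > b > 0$, makes $N - \func\func^t$ singular on this plane while its trace there, $a+b-1 > b > 0$, stays positive; hence $N - \func\func^t$ is positive semidefinite of rank exactly $s-1$.

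I expect this rank-reduction step to be the main obstacle: naively always peeling the top eigenvector can strand us with more eigen-directions than remaining vectors, so the two-dimensional rotation that annihilates one eigenvalue while preserving positive semidefiniteness is the heart of the argument. Once it is in place the induction closes automatically, and the bookkeeping (trace dropping by exactly $1$, rank staying $\le$ trace) guarantees termination after exactly $m$ peels, producing the required $m$ unit vectors. A less constructive alternative would realize the $\func_j$ as columns of a factor of an $m \times m$ Gram matrix with all-ones diagonal and spectrum $\{\lambda_1,\dots,\lambda_k,0,\dots,0\}$, whose existence follows from the Schur--Horn theorem after checking that the all-ones vector is majorized by the eigenvalue vector of $M$; the same inequality $\ell \le \sum_{i \le \ell}\lambda_i$ (valid since $\sum_{i\le\ell}\lambda_i \ge \ell\, m/k > \ell$ for $\ell \le k$, and equals $m \ge \ell$ otherwise) underlies both proofs.
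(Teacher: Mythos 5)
Your proof is correct, but it takes a genuinely different route from the paper. The paper argues globally: it diagonalizes $M=UDU^t$, sets $A=USV^t$ with $S\in\R^{k\times m}$ rectangular-diagonal carrying $\sqrt{d_i}$, observes that $AA^t=M$ for any orthogonal $V$, and then reduces the unit-column constraint to finding $V$ with $[VCV^t]_{jj}=0$ for the trace-zero matrix $C=S^tS-I$ --- which it builds as an explicit product of Givens rotations. Your argument is instead a greedy rank-one peeling: you maintain the invariant $\rank N\le \tr N$ with integer trace, subtract the top eigendirection when the rank has slack, and otherwise rotate within the two-dimensional eigenplane spanned by an eigenvalue $a>1$ and a nonzero eigenvalue $b<1$ to annihilate one eigenvalue of the remainder while keeping it positive semidefinite (your computation $\beta^2=b(a-1)/(a-b)\in(0,1)$ checks out, and the remainder block-diagonalizes against the untouched eigendirections, so the rank drops to exactly $s-1$). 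One cosmetic point: in the delicate case you should say $b$ is the smallest \emph{nonzero} eigenvalue (or just any nonzero eigenvalue below $1$), since $\lambda_{\min}(N)$ may be $0$ when $\rank N=s<k$; with $b=0$ the rank would not drop. Both proofs are constructive instances of the Schur--Horn realizability you mention, and both ultimately hinge on a two-dimensional rotation; the paper's version is more directly algorithmic (it outputs $V$ in one pass), while yours is more self-contained, makes transparent that the true hypothesis is $\rank M\le\tr M=m$ rather than positive definiteness, and therefore extends verbatim to positive semidefinite $M$.
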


The proof of Lemma \ref{lemma:free} is also delegated to the
Supplementary. It is however important to note that this proof is
constructive; it will allow us to construct D-optimal designs, once we
fully characterize them in Theorem \ref{thm:char} below.

\begin{theorem}\label{thm:char}
  Let:
  \begin{itemize}
    \item The D-optimality design criterion
    \cite{AlexanderianGloorGhattas14}:
    \begin{align*}
      \begin{split}
        \tar(\obs) 
        &= \frac12 \log \det ( I + \sigma^{-2} \prcov^{1/2} \fwd ^*
        \obs^* \obs \fwd \prcov^{1/2}), 
      \end{split}
    \end{align*}
  \item \(\opt\) a D-optimal design operator
    \begin{equation*}
      \opt = \argmax_{\|\meas_j\| = 1, j=1,\dots,m}\tar(\obs),
    \end{equation*}
  \item \(\{\lambda_i\}_{i=1}^\infty\) eigenvalues of
    \(\fwd\prcov\fwd^*\) in decreasing order of magnitude.
  \item \(\{\eta_i\}_{i=1}^\infty\) eigenvalues of \(\opt^*\opt\).
 
  \end{itemize}

  Then:
  \begin{enumerate}
  \item  \(\tr{\opt^*\opt} = m\).
  \item \(\opt^*\opt\) and \(\fwd\prcov\fwd^*\) are simultaneously
    diagonalizable.
  \item \(k := \rank \opt^*\opt \leq m\) and
    \begin{equation*}
      \tar(\opt) = \frac12 \sum_{i=1}^{k} \log (1 + \sigma^{-2}\lambda_i\eta_i). 
    \end{equation*}
  \item
    \begin{equation*}
        \eta_i = \begin{cases}
          \frac{m}{k} - \sigma^2 \lambda_i^{-1} + \sigma^2 \frac{1}{k} \sum_{j=1}^k \lambda_j^{-1} & 1 \leq i \leq k \\
          0 & i > k 
        \end{cases}.
    \end{equation*}
  \item The covariance of the pushforward \(\fwd_{*} \postopt\) is \(\left
    ( (\fwd \prcov \fwd^*)^{-1} + \sigma^{-2} \opt^*\opt \right
    )^{-1}\) and its eigenvalues are
    \begin{equation}\label{eq:cylinders}
      \theta_i =
      \begin{cases}
        \left(\frac{\sum_{j=1}^k \lambda_j^{-1} + \sigma^{-2}m}{k} \right )^{-1} & i \leq k \\
        \lambda_i &  i > k 
      \end{cases}.
    \end{equation}
  \end{enumerate}

  Then:
  \begin{enumerate}
  \item  \(\tr{\obs^*\obs} = m\).
  \item \(\obs^*\obs\) and \(\fwd\prcov\fwd^*\) are simultaneously
    diagonalizable.
  \item \(k := \rank \obs^*\obs \leq m\) and
    \begin{equation*}
      \tar(\obs) = \frac12 \sum_{i=1}^{k} \log (1 + \sigma^{-2}\lambda_i\eta_i). 
    \end{equation*}
  \item
    \begin{equation*}
        \eta_i = \begin{cases}
          \frac{m}{k} - \sigma^2 \lambda_i^{-1} + \sigma^2 \frac{1}{k} \sum_{j=1}^k \lambda_j^{-1} & 1 \leq i \leq k \\
          0 & i > k 
        \end{cases}.
    \end{equation*}
  \item The covariance of the pushforwad \(\fwd_{*} \post\) is \(\left
    ( (\fwd \prcov \fwd^*)^{-1} + \sigma^{-2} \obs^*\obs \right
    )^{-1}\) and its eigenvalues are
    \begin{equation*}
      \theta_i =
      \begin{cases}
        \left(\frac{\sum_{j=1}^k \lambda_j^{-1} + \sigma^{-2}m}{k} \right )^{-1} & i \leq k \\
        \lambda_i &  i > k 
      \end{cases}
    \end{equation*}
  \end{enumerate}
\end{theorem}
\begin{proof}
  Part (1) is immediate for any measurement operator $\obs$ that
  satisfies the unit norm constraint on measurements. Part (2)
  was proved in Proposition \ref{prop:same_ev}. Part (3) was proved in
  Proposition \ref{prop:true_target}.
  
  Part (4) is a consequence of Propositions \ref{prop:true_target} and
  \ref{prop:kkt}, with the caveat that we did not show that finding
  $\opt$ so that $\opt^*\opt$ has the desired eigenvalues is
  feasible. To this end, we utilize Lemma \ref{lemma:free}: let $M =
  \diag(\eta_1, \dots, \eta_k)$, diagonal with respect to the first
  $k$ eigenvectors of $\fwd \prcov \fwd^*$. We take $\opt := A$ from
  Lemma \ref{lemma:free}.
  
  Recall from \eqref{eq:postcov}, that the posterior precision is
  $\postcov^{-1} = \prcov^{-1} + \sigma^{-2}\fwd^*\opt^*\opt\fwd$. The
  first statement in part (5) now follows from Proposition
  \ref{prop:twice_woodbury}, while the second statement follows from
  parts (1) and (4).
\end{proof}

\pgfplotstableread{
  Label     prior  optimal  sub-optimal 
  1         0.2    1.8           1.7
  2         0.8    1.2           0.8
  3         2.2    0             0.5
  4         3.5    0             0.0
}\optimalvsnot

\begin{figure}\label{fig:tikz_clusterization}
  \centering
  \begin{tikzpicture}[scale=0.75]
    \begin{axis}[
        ybar stacked,
        ymin=0,
        ymax=4,
        xtick=data,
        legend style={cells={anchor=east}, legend pos=north west, legend columns=-1},
        reverse legend=false, 
        xticklabels from table={\optimalvsnot}{Label},
        xticklabel style={text width=2cm,align=center},
        legend plot pos=right,
        ylabel={\LARGE precision --- prior and posterior},
        xlabel={\LARGE eigenvector},
      ]
      \addplot [fill=blue!60]  table [y=prior,   meta=Label, x expr=\coordindex] {\optimalvsnot};
      \addplot [pattern=north east lines, pattern color=green!80]  table [y=optimal, meta=Label, x expr=\coordindex] {\optimalvsnot};     
      \addlegendentry[scale=1.4]{$\sigma^2\lambda_i^{-1}$}
      \addlegendentry[scale=1.4]{optimal $\eta_i$s}
    \end{axis}
  \end{tikzpicture}
  \begin{tikzpicture}[scale=0.75]
    \begin{axis}[
        ybar stacked,
        ymin=0,
        ymax=4,
        xtick=data,
        legend style={cells={anchor=east}, legend pos=north west, legend columns=-1},
        reverse legend=false, 
        xticklabels from table={\optimalvsnot}{Label},
        xticklabel style={text width=2cm,align=center},
        legend plot pos=right,
        ylabel={\LARGE precision --- prior and posterior},
        xlabel={\LARGE eigenvector} ,
      ]   
      \addplot [fill=blue!60]  table [y=prior,       meta=Label, x expr=\coordindex] {\optimalvsnot};
      \addplot [pattern=north east lines, pattern color=green!80]  table [y=sub-optimal, meta=Label, x expr=\coordindex] {\optimalvsnot};
      \addlegendentry[scale=1.4]{$\sigma^2\lambda_i^{-1}$}
      \addlegendentry[scale=1.4]{sub-optimal $\eta_i$s}
    \end{axis}
  \end{tikzpicture}
  \caption{A comparison of the eigenvalues of the pushforward
    posterior precision $(\fwd\prcov\fwd^*)^{-1} +
    \sigma^{-2}\obs^*\obs$ for a D-optimal design (left) and a
    sub-optimal design (right). Both designs are allowed $m=3$
    measurements. We assume $\sigma^2=1$ and thus, the blue area has
    accumulated height of $\sigma^{-2}m = 3$ in both panels. The
    D-optimal design (left) increases precision where it is
    lowest. The sub-optimal design (right) does not.}
  \label{fig:optimal_vs_not}
\end{figure}

Part (5) of Theorem \ref{thm:char} gives us deep understanding of
D-optimal designs when $\modcov = 0$: Imagine each eigenvector of the
\emph{precision} $\left(\fwd \prcov \fwd^*\right )^{-1}$ corresponds
to a graduated lab cylinder. 
cylinder $i$ is filled, a-priori, with green liquid of
$\lambda_i^{-1}$ volume units. We are allowed $m$ measurement, so we
have blue liquid of volume $\sigma^{-2}m$ units at our disposal. When
we seek a D-optimal design, we distribute the blue liquid by
repeatedly adding a drop to whatever cylinder currently has the lowest
level of liquid in it, as long as its index $i \leq m$. The result of
such a procedure is that the precision for the first $k$ eigenvectors
is the average of their total aggregated precision $\sum_{j=1}^k
\lambda_j^{-1} + \sigma^{-2}m$, see eq.~\eqref{eq:cylinders} and
Fig.~\ref{fig:optimal_vs_not} for an illustration.

\subsection{Answer to Question \ref{q:why}}\label{subsec:why}
Building on Theorem \ref{thm:char}, we can now give a compelling
explanation to the measurement clusterization we observed for the
inverse problem of the heat equation, see Section
\ref{subsub:clusterization1} below. We also suggest a generic
explanation for clusterization, see Section \ref{subsub:cara}.

\subsubsection{Clusterization in the heat equation}\label{subsub:clusterization1}
Consider $\fwd$ and $\prcov$ from \emph{the inverse problem of the
heat equation}. As before, we denote the eigenvalues of
$\fwd\prcov\fwd^*$ by $\lambda_j$. We input these eigenvalues into our
\emph{generic} model, and find a D-optimal design $\opt$ for our
generic model using Theorem \ref{thm:char}. In our generic model, the
measurements we take are best utilized in reducing uncertainty for the
first $k$ eigenvectors. So, a D-optimal design arising from our
\emph{generic model} completely avoids measuring eigenvectors $k+1$
and above.

Of course, in a real life problem --- such as the inverse problem of
the 1D heat equation --- it is likely impossible to find measurements
for which all eigenvectors $k+1$ and above are zero. However, if the
eigenvalues of $\fwd\prcov\fwd^*$ decay quickly (recall the
square-exponential decay for eigenvalues of the 1D heat equation in
eq.\eqref{eq:decay}), a D-optimal design will try to balance measuring
a small number (i.e.~$k$) of the leading eigenvectors.

The abovementioned balance is explored in
Fig.~\ref{fig:eigenvectors}. We allow $m=4$ measurements in $\Omega =
[0,1]$ and observe that D-optimal measurement locations are clustered
at $x_1 = 0.31$ and $x_2 = 0.69$. Upon close inspection of the scaled
eigenvectors of $\fwd \prcov \fwd^*$, we first observe that
eigenvectors $3$ and above have negligible prior amplitude. Since we
only have $m=4$ measurements at our disposal, we interpret these
results, following Theorem \ref{thm:char}, as implying we should only
care about measuring the first and second eigenvectors. Then, we note
the D-optimal $x_1,x_2$ present a compromise between the amplitude of
the first and second eigenvectors. For example, a measurement at
$x=0.5$ would have ignored the second eigenvector altogether, since
the second eigenvector is zero at $x=0.5$.

Now we can understand measurement clusterization for the inverse
problem of the 1D heat equation. A D-optimal design attempts to
measure the first $k$ eigenvectors of $\fwd \prcov \fwd^*$. But there
may be (spatial) limitations on where these $k$ eigenvectors have
large amplitude. For the inverse problem of the heat equation there
are two spatial locations that present a good compromise between the
amplitudes of the first and second eigenvectors, namely $x_1$ and
$x_2$ --- see Fig.~\ref{fig:eigenvectors}. We have $m=4$ measurements
at our disposal but only two spatial locations that are a good
compromise between the amplitudes of the first and second scaled
eigenvectors. Thus, clusterization arises as a consequence of the
pigeonhole principle.

\begin{figure}\label{fig:eigenvectors}
    \centering
    \includegraphics[width=\textwidth]{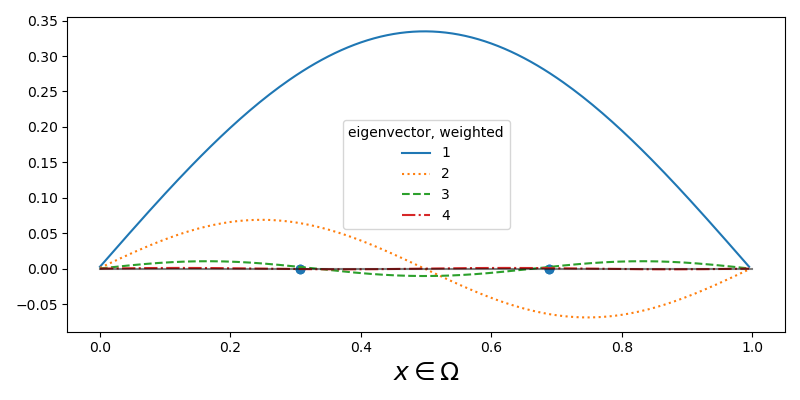}
    \caption{D-optimal measurement locations ($m=4$ measurements) and
      weighted eigenvectors for finding the initial condition of the
      1D heat equation. Measurement locations and weighted
      eigenvectors are plotted over the computational domain $\Omega =
      [0, 1]$ (x-axis). Measurement clusterization occurs
      approximately at $0.31$ and $0.69$. These two locations are a
      compromise between the amplitudes of the first and second
      eigenvectors, which are the eigenvectors that a D-optimal design
      aims to measure. Allocating $m=4$ measurements into two
      locations results in clusterization, according to the pigeonhole
      principle.}
  \label{fig:why}
\end{figure}

\subsubsection{Clusterization in our generic model}\label{subsub:cara}
We can gain further insight to clusterization in our generic model,
from Carath\'eodory's Theorem and the concentration on the first $k$
eigenvectors of $\fwd \prcov \fwd^*$. We present a short discussion
adapting arguments presented in \cite[Chapter 3]{silvey1980} and
\cite[Section 5.2.3]{pronzatoPazman2013}.

Consider $\opt$ a D-optimal design under our generic model.  As
instructed by Theorem \ref{thm:char}, we ignore all but the first $k$
eigenvectors of $\fwd \prcov \fwd$. Thus, we replace $\hilo$ with
$\hilo^{(k)}$ --- the $k$-dimensional subspace spanned by the first
$k$ eigenvectors of $\fwd^*\prcov\fwd$. Let
\begin{equation*}
  \mathcal{M} := \conv \{\meas \meas^* : \meas\in \hilo^{(k)}, \|\meas\|=1\},
\end{equation*}
where $\conv$ denotes the convex hull of a set. The set $\mathcal{M}$
contains only positive-definite operators on a $k$-dimensional vector
space. Hence $\mathcal{M}$ lives in a $k(k+1)/2$-dimensional vector
space. Since $\opt^*\opt = \sum_{i=1}^m \meas_i\meas_i^*$ for $\meas_i
\in \hilo^{(k)}$, it is easy to verify that $\frac1m \opt^*\opt \in
\mathcal{M}$.  Recall Carath\'eodory's Theorem:
\begin{ctheorem}[Carath\'eodory]
  Let $X \subseteq \mathbb{R}^n, X \neq \phi$ and denote $\conv (X)$
  the convex hull of $X$. For every $x \in \conv (X)$, $x$ is a convex
  combination of at most $n+1$ vectors in $X$.
\end{ctheorem}
Carath\'eodory's Theorem implies that there exist $\meas_i$ and
$\alpha_i$ such that
\begin{equation*}
  \opt^*\opt = \sum_{i=1}^I \alpha_i \meas_i\meas_i^*,
\end{equation*}
where $\|\meas_i\|=1, \sum\alpha_i = m, \alpha_i \geq 0$ and $I =
\frac{k(k+1)}{2} + 1$. We can thus write $\opt$ as:

\[
\opt =
\left[
  \begin{array}{ccc}
    \horzbar & \sqrt{\alpha_i} \meas^*_1 & \horzbar \\
    \horzbar & \sqrt{\alpha_2} \meas_2^* & \horzbar \\
             & \vdots    &          \\
    \horzbar & \sqrt{\alpha_I} \meas_I^* & \horzbar \\
  \end{array}
\right].
\]

Unfortunately, $\opt$ is not a valid design, since its rows do not
have unit norm. Still, the above representation of $\opt$ is useful:
If $m > \frac{k(k+1)}{2} + 1$, then $\alpha_i > 1$ for some $1\leq i
\leq I$.  Thus, we can view $\opt$ as a clustered design, since it
places weight $>1$ on a single measurement vector.

\subsubsection{Convergence}\label{subsub:convergence}
In this section we fulfill our promise from Section
\ref{subsub:implications} and prove that the posterior uncertainty
ellipsoid in $\hilo$ will contract to zero along every eigenvector of
$\fwd \prcov \fwd^*$. In our proof we ignore potential problems with
conducting inference on function spaces which are not unique to
D-optimal designs (see e.g.~\cite{owhadi2015} for more details).

First, denote $\opt_m$ a D-optimal design utilizing $m$ measurements
and denote $k_m := \rank\opt_m^*\opt_m$. An immediate consequence of
Theorem \ref{thm:char} is that allowing more measurements will
eventually allow us to measure each eigenvalue, i.e.:
\begin{equation}\label{eq:lim}
  \lim_{m\to\infty} k_m = \infty
\end{equation}

Now, recall that part (5) of Theorem \ref{thm:char} ensures that the
eigenvalues of the posterior pushforward covariance equal
\begin{equation*}
  \theta^{(k)}_i = \left ( \frac{\sum_{j=1}^{k} \lambda_j^{-1} +
    \sigma^{-2}m}{k} \right )^{-1}, \text{ for $i\leq k$}.
\end{equation*}
Using the inequality for the arithmetic and harmonic means, it is easy
to verify that
\begin{equation*}
  \theta^{(k)}_i \leq \lambda_{k}.
\end{equation*}
Since $\lim_{k\to \infty} \lambda_k= 0$, we conclude that for all $i$,
$\lim_{k\to\infty} \theta^{(k)}_i = 0$. Combining the latter
observation with eq.~\eqref{eq:lim}, we conclude that
$\lim_{m\to\infty} \theta^{(k_m)}_i = 0$ for all $i$. Therefore,
posterior uncertainty decays to zero for all eigenvectors.

\section{Numerical Experiments}

\subsection{Simulating Theorem \ref{thm:char}}\label{subsec:lemma_sims}
In the proof of Theorem \ref{thm:char} we utilize Lemma
\ref{lemma:free} to construct D-optimal designs. We implement this
construction with the goal of testing numerically how prevalent are
clustered designs. To this end, we would like to generate random prior
eigenvalues $\lambda_j$, fix $m$ and $k$, find what a D-optimal
$\opt^*\opt$ should be, and then utilize the construction of
Theorem~\ref{thm:char} and Lemma~\ref{lemma:free} to find $\opt$.

To simplify things, we directly generate $\opt^*\opt$. We iterate over
the number of measurements $m \in \{4,\dots, 24\}$, and for every $m$
we then iterate over $k:=\rank \opt^*\opt \in \{2,\dots, m-1\}$. For
each pair $m,k$ we repeat the following steps $N=5000$ times:
\begin{enumerate}
\item Generate random diagonal $D\in \mathbb{R}^{k\times k}$ with
  entries $\log (d_i) \sim \mathcal{N}(50,15)$.
\item Normalize $D$ that $\ttr D = m$.
\item Conjugate $D$ by a random orthogonal matrix to form a positive
  semi-definite $M := UDU^t \in \mathbb{R}^{k\times k}$. This $M$
  represents $\opt^*\opt$.
\item Apply the construction of Lemma \ref{lemma:free} to calculate
  $A$ such that $AA^t = M$, where $A$ has unit norm columns. $A^t$ is
  our optimal design $\opt$.
\item Since $A^t$ corresponds to $\opt$, its columns correspond to
  measurement vectors. We call $A$ "clustered" if $A$ has two or more
  identical columns (up to some numerical precision threshold,
  i.e.~$10^{-5}$).
\end{enumerate}
We then calculate the fraction of clustered designs of the simulations
we ran, for each pair $m,k$. Clusterization occurred at high rates
($>99.9\%$) whenever $m-k > 1$; see Fig.~\ref{fig:sim_AAt}. Hence, in
these simulations, clusterization is a generic property. However, when
$m-k = 1$, clusterization does not occur. We do not why this is so.

\begin{figure}
    \centering
    \includegraphics[height=0.5\textwidth]{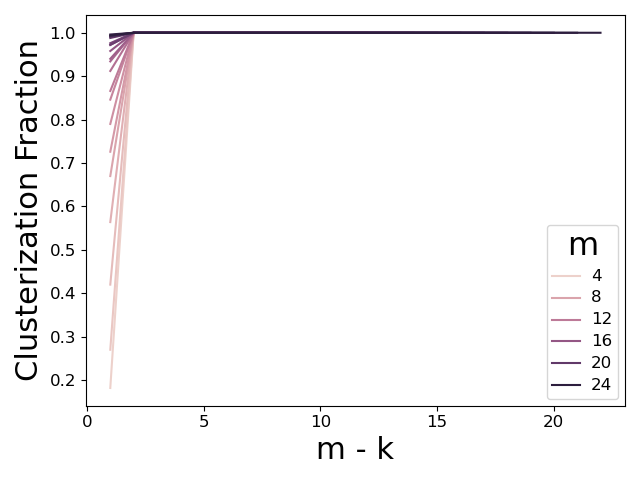}
    \caption{Fraction of clustered $A$ for $AA^t = M$ and $M$
      generated randomly (see text and repository for details on
      generating $M$). It is evident that when $m-k > 1$ clusterization
      is prevalent, whereas for lower $m-k$ clusterization is not.}
  \label{fig:sim_AAt}
\end{figure}

Full results are located in the \texttt{simulations.csv} file within
the accompanying \href{https://github.com/yairdaon/OED}{repository}.
Code implementing the experiments described above is located in module
\texttt{zeros.py} of said repository. Runtime should be less than 30
minutes on any reasonably modern laptop (it took 12 minutes on the
author's laptop).

\subsection{Correlated errors}\label{subsec:corr_errors_sims}
In order to verify the results of Section \ref{section:non_vanishing},
we run simulations of the inverse problem of the 1D heat equation with
nonvanishing model error \(\modcov = \prcov^2 \). Indeed, including
model correlation pushes measurements apart, see
Fig.~\ref{fig:corr_errors}. Code generating Fig.~\ref{fig:corr_errors}
is located in module \texttt{clusterization.py} in the accompanying
\href{https://github.com/yairdaon/OED}{repository}.

\begin{figure}
    \centering
    \includegraphics[height=0.5\textwidth]{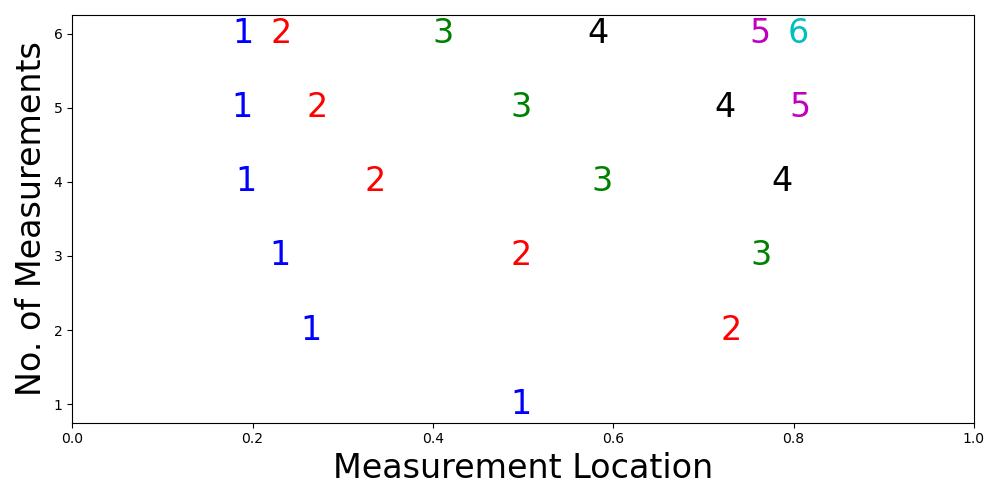}
    \caption{Model correlation mitigates clusterization. We add a
      model correlation term to the error terms in the 1D heat
      equation inverse problem. As expected, measurements are pushed
      away owing to the model error term.}
  \label{fig:corr_errors}
\end{figure}

\begin{acks}[Acknowledgments]
  This study is a result of research I started during my PhD studies
  under the instruction of Prof.~Georg Stadler at the Courant
  Institute of Mathematical Sciences. I would like to thank him for
  his great mentorship, attention to details and kindness. I would
  also like to thank Christian Remling, who helped me find a proof for
  Lemma \ref{lemma:free} in
  \href{https://mathoverflow.net/questions/280168/redistribute-diagonal-entries-of-a-matrix/280203#280203c}{Mathoverflow}.
  Last, but certainly not least, I would like to thank the three
  referees, associate editor and editor in chief for providing
  detailed and insightful reviewes that have made this manuscript a
  whole lot better.
\end{acks}

\begin{funding} 
  This research was supported in part by an appointment with the
  National Science Foundation (NSF) Mathematical Sciences Graduate
  Internship (MSGI) Program sponsored by the NSF Division of
  Mathematical Sciences. This program is administered by the Oak Ridge
  Institute for Science and Education (ORISE) through an interagency
  agreement between the U.S. Department of Energy (DOE) and NSF. ORISE
  is managed for DOE by ORAU. All opinions expressed in this paper are
  the author's and do not necessarily reflect the policies and views
  of NSF, ORAU/ORISE, or DOE.
\end{funding}

\bibliographystyle{plain}


\end{document}